\documentclass[12pt, english]{article}

\pagestyle{plain}

\usepackage{babel}
\usepackage[latin1]{inputenc}
\usepackage{amsmath}
\usepackage{amssymb}
\usepackage{amsthm}

\theoremstyle{plain}
\newtheorem{Thm}{Theorem}%[section]
\newtheorem{Lemma}[Thm]{Lemma}

\theoremstyle{definition}
\newtheorem*{Strategy*}{Strategy}
%\newtheorem{Remark}[Thm]{Remark}
%\newtheorem{Def}[Thm]{Definition}
%\newtheorem{Def*}{Definition}

%\newtheorem{thm}{Theorem}
%\newtheorem{Example}[Thm]{Example}
%\newtheorem{Cor}[Thm]{Corollary}

%%%%%%%%%%%%%%%%%%%%%%%%%%%%%%%%%%%%%%%%
\begin{document}

\title{Partially ordered secretaries}

\author{Ragnar Freij and Johan W\"astlund \\
\small Department of Mathematics\\[-0.8ex]
\small Chalmers University of Technology, \\[-0.8ex] 
\small S-412 96 G\"oteborg, Sweden\\[-0.8ex]
\small \texttt{freij@chalmers.se, wastlund@chalmers.se}
}
\date{\small \today}
%\small Mathematics Subject Classification: ??}

\maketitle

\begin{abstract}  The elements of a finite nonempty partially ordered set are exposed at independent uniform times in $[0,1]$ to a selector who, at any given time, can see the structure of the induced partial order on the exposed elements. The selector's task is to choose online a maximal element. 

This generalizes the classical linear order secretary problem, for which it is known that the selector can succeed with probability $1/e$ and that this is best possible.

We describe a strategy for the general problem that achieves success probability at least $1/e$ for an arbitrary partial order.
\end{abstract}

\section{Background}
The classical secretary problem asks for a strategy that with reasonable probability picks online the best of $n$ applicants for a job, given that at any time the only information available is the relative ranks of the applicants that have been interviewed so far. For a historical overview of the secretary problem and some of its generalizations, consult \cite{F89}.

The well-known solution is to reject the first $n/e$ applicants (approximately), and after that to accept the first applicant who is better than all those. The probability of success is asymptotically $1/e$ for large $n$, and this is best possible. 

It was observed by J.~Preater \cite{P99} that the selector can achieve success probability bounded away from zero in a partially ordered version of the problem. Here the secretaries are replaced by the elements of a partially ordered set $P$. These elements are exposed in random order, and at every time the selector can see the order relations between all pairs of exposed elements, in other words the induced partial order on the exposed elements. The task is now to select online one of the maximal elements of $P$.

The details of the problem can be phrased in a couple of different ways. The selector can be assumed to know in advance the number of elements in $P$. Alternatively, the selector knows the structure of $P$, but cannot ``recognize'' the individual elements as they arrive. In this later case, optimal results are known for some special classes of partial orders. See, for instance, \cite{M98} for a strategy that is optimal when $P$ is known to be a binary tree.

In this paper we consider a continuous time version which is at least as hard for the selector as these discrete time versions, and which can easily be shown to be equivalent as far as the best general success probability goes. In this version, the elements of $P$ are exposed at independent uniform times in the interval $[0,1]$. We ask for a strategy that achieves success probability at least $c>0$ for every finite nonempty partial order $P$. Obviously such a strategy for the continuous time version can be applied also in discrete time, since knowing the number $n$ of elements in $P$, we can generate $n$ random ``times'' in $[0,1]$ ourselves, and assign them in order to the elements as they arrive.
 
In \cite{P99}, Preater described a simple strategy and showed that it succeeds with probability at least $1/8$ for every $P$. The analysis of Preater's strategy was refined by N.~Georgiou, M.~Kuchta, M.~Morayne and J.~Niemiec \cite{GKMN08}, who showed that with a trivial improvement, it actually succeeds with probability at least $1/4$.

Later, Kozik \cite{K10} suggested another strategy for the general situation, and proved that it has success probability at least a constant $c>1/4$ for every sufficiently large $n$. Kozik's strategy is indeed a generalization of the strategy in the linear case. 
%While the analysis in \cite{K10} is not tight, it seems unlikely that the strategy would give success pebability $1/e$, even asymptotically.

Here, we describe a different strategy that achieves success probability at least $1/e$ for every $P$. Since this matches the upper bound given by the analysis of the classical linear order case, it settles the question of the best possible general success probability. 

\section{Description and analysis of the strategy}
Suppose that the elements of a finite nonempty partial order $P$ are assigned distinct real \emph{weights}. We define the \emph{greedy maximum} of $P$ as follows. Let $z_0$ be the element of smallest weight in $P$. As long as $z_i$ is not maximal, let $z_{i+1}$ be the element of minimum weight among the elements larger than $z_i$. This gives a chain whose terminal element is the greedy maximum of $P$.

Our strategy for the partially ordered secretary problem is as follows:
\begin{Strategy*}
We assign independent uniform weights from $[0,1]$ to the elements as they arrive. After rejecting everything up to time $1/e$, we accept the first element $x$ which is itself the greedy maximum of the induced partial order $P_x$ on the elements, including $x$, that have been exposed up to the time when $x$ arrives. 
\end{Strategy*}

\begin{Thm} \label{T:main}
This strategy succeeds in accepting a maximal element with probability at least $1/e$ for every partially ordered set.
\end{Thm}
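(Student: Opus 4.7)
The plan is to show that the probability of accepting the greedy maximum $M$ of $P$ itself is at least $1/e$; since $M$ is by construction a maximal element of $P$, this will suffice. Let $z_0 < z_1 < \cdots < z_k = M$ denote the greedy chain of $P$ with respect to the random weight assignment. First I would note that the weights are strictly increasing along this chain: each $z_{i+1}$ minimizes weight over $\{y : y > z_i\}$, which is contained in $\{y : y > z_{i-1}\}$ where $z_i$ already achieved the minimum, so $w(z_i) < w(z_{i+1})$.

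Next I would establish a stability lemma for the greedy procedure. If $Q \subseteq P$ and the initial segment $z_0, \ldots, z_i$ of the greedy chain of $P$ lies in $Q$, then the greedy chain of $Q$ begins with exactly $z_0, z_1, \ldots, z_i$. The proof is by induction on $i$: $z_0$ has the global minimum weight, hence the minimum weight in $Q$; and once the chains agree through $z_j$, the element $z_{j+1}$ minimizes the weight over $\{y > z_j\}$ in all of $P$ and so also minimizes it over $\{y > z_j\} \cap Q$. As a corollary, whenever every $z_i$ with $i < k$ arrives before $M$, the element $M$ is the greedy maximum of $P_M$, so $M$ becomes a ``candidate'' (an element that is the greedy maximum of the induced order at its own arrival time) at time $t_M$.

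I would then mimic the classical secretary analysis, conditioning on the weights and on $t_M = s$ with $s > 1/e$. The aim is a bound of the form $\Pr(\text{accept } M \mid t_M = s, \text{weights}) \geq (1/e)/s$, so that integration over $s \in (1/e, 1]$ yields $\int_{1/e}^1 (1/e)/s\,ds = 1/e$. The sufficient event for acceptance breaks into two parts: (i) all of $z_0, \ldots, z_{k-1}$ arrive before $M$, ensuring via the stability lemma that $M$ is a candidate, and (ii) no element arriving in the window $(1/e, s)$ is itself a candidate at its arrival.

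The main obstacle is part (ii): unlike the classical linear case, where the only candidates are new maxima and the ``false candidate'' structure is trivial, here a non-chain element $x$ arriving in $(1/e, s)$ could coincidentally be the greedy maximum of $P_x$, causing premature acceptance. To handle this, I would study precisely how the greedy chain of $P_x$ can end at $x$: after further conditioning on the arrival times of non-chain elements, the remaining randomness lies in the arrival times of $z_0, \ldots, z_k$, and I would try to reduce the overall analysis to a classical secretary instance on the chain itself, where the weight-ordered structure of $z_0, \ldots, z_k$ plays the role of the linear ranks. The hard part will be bundling conditions (i) and (ii) into a single chain-based event whose conditional probability given $t_M = s$ cleanly matches the classical bound $(1/e)/s$.
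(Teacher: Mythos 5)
Your setup is sound as far as it goes: the monotonicity of weights along the greedy chain, the stability lemma for induced subposets, and the resulting sufficient condition for $M$ to be tagged are all correct, and your overall decomposition (bounding the probability of accepting the greedy maximum itself, then integrating $\tfrac{1}{es}$ over $s\in(1/e,1]$) is essentially the paper's, which shows $\Pr(x\text{ accepted})\ge \mu(x)/e$ with $\mu(x)=\Pr(x=M)$ and sums over maximal $x$. But the argument stops exactly where the real work begins: you never establish part (ii), the bound of $(1/e)/s$ on the probability that no element arriving in $(1/e,s)$ is tagged, and you say explicitly that this is ``the hard part.'' That step is the theorem's entire content, so as it stands this is a plan rather than a proof.

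Moreover, the route you sketch for closing the gap --- conditioning on the weights and on the arrival times of non-chain elements, then reducing to a classical secretary instance ``on the chain itself'' --- is unlikely to work, because tagged elements need not lie on the greedy chain of $P$ at all: the first element to arrive is always tagged, whatever it is, and for an antichain the greedy chain is a single element while every running weight-minimum gets tagged. The paper's resolution is a global exchangeability fact (its Lemmas~2 and~3): for an arbitrary poset, the $k$-th arriving element is tagged with probability exactly $1/k$ and these events are mutually independent, so the tagging process is distributionally identical to the record process of the classical linear problem; consequently the arrival time of the last tagged element before time $t$ is uniform on $[0,t]$, which is precisely what yields the $1/(et)$ factor. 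You need this lemma (or an equivalent statement averaging over the weights rather than conditioning on them) to finish; it does not follow from chain considerations alone.
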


The rest of the paper is devoted to the proof of Theorem~\ref{T:main}. To simplify the discussion, let us say that we \emph{tag} the element $x$ if it is the greedy maximum of $P_x$. In order for our strategy to \emph{accept} $x$, three things are thus required: $x$ must arrive at a time $t>1/e$, it must be tagged, and finally no other element must be tagged between time $1/e$ and time $t$. 

In the following, we think of $P$ as a fixed partially ordered set of $n$ elements.
%There is no cheating in assuming $P$ to be known, since we have already specified our strategy.

\begin{Lemma} \label{L:independence}
Let $a_1,\dots, a_n$ be the elements of $P$ in the order that they are exposed. Let $A_k$ be the event that $a_k$ is tagged. Then $Pr(A_k) = 1/k$ and $A_1,\dots, A_n$ are independent. 
\end{Lemma}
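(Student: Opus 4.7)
The plan is to condition on the vector of weights $w$, so that the greedy maximum $g(S)$ becomes a deterministic function on subsets $S \subseteq P$ with $g(S) \in S$, and the arrival order becomes a uniform random permutation $\sigma$ of $P$ (independent of $w$). Writing $a_k = \sigma(k)$ and $Q_k = \{\sigma(1), \ldots, \sigma(k)\}$, the event $A_k$ becomes $\{a_k = g(Q_k)\}$. For the marginal I would note that, conditional on $Q_k$ as a set, the arrivals inside $Q_k$ are exchangeable, so $a_k$ is uniform on $Q_k$ while $g(Q_k)$ is a fixed element of $Q_k$; this yields $\Pr(A_k \mid w) = 1/k$ and hence $\Pr(A_k) = 1/k$.

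For independence I would proceed by induction on $n = |P|$, leaning on a crucial structural observation: the greedy maximum of a subset $S$ depends only on $S$ together with its induced order and the weights on $S$, never on the ambient poset. The base case $n = 1$ is immediate. For the inductive step I would condition additionally on $\sigma(n) = p$ for some $p \in P$. Then $(\sigma(1), \ldots, \sigma(n-1))$ is a uniform random permutation of $P \setminus \{p\}$, and for each $k < n$ the event $A_k$ is precisely the analogous tagging event for the smaller poset $P \setminus \{p\}$ with the restricted weights. By the inductive hypothesis, $A_1, \ldots, A_{n-1}$ are independent with $\Pr(A_k) = 1/k$ conditional on $\sigma(n) = p$.

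Because this conditional distribution is the same for every choice of $p$, the vector $(A_1, \ldots, A_{n-1})$ is independent of $\sigma(n)$, and in particular independent of $A_n$, which is the $\sigma(n)$-measurable event $\{\sigma(n) = g(P)\}$. The marginal $\Pr(A_n \mid w) = 1/n$ is immediate since $\sigma(n)$ is uniform on $P$. Combining these facts, $A_1, \ldots, A_n$ are jointly independent with the stated marginals conditional on $w$, and averaging over $w$ gives the unconditional statement. The main conceptual point, rather than a genuine obstacle, is the ambient-independence of the greedy maximum: without it one might fear that the structure of $P$ outside $Q_k$ leaks into $g(Q_k)$ and breaks the induction, but in fact it does not.
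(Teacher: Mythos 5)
Your proof is correct and follows essentially the same route as the paper's: both condition on the weights so that the greedy maximum of any exposed set becomes deterministic, and both combine the exchangeability of the arrival order with the key observation that the greedy maximum of $\{a_1,\dots,a_k\}$ depends only on that set and not on the ambient poset. The paper gets independence in one stroke by conditioning on the identities of $a_{k+1},\dots,a_n$ (which determines $A_{k+1},\dots,A_n$ and leaves $a_k$ uniform over the first $k$ arrivals), whereas you unroll the same conditioning into an induction that peels off the last arrival one at a time; the content is identical.
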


\begin{proof}
Suppose that we know $P$ and the weights that are eventually assigned to all its elements. Suppose moreover that we know the elements $a_{k+1},\dots, a_n$. Then in particular we know which of them are tagged. Now consider the greedy maximum of the induced partial order on $\{a_1,\dots, a_k\}$. The probability that this element was the last one to arrive and thereby labeled $a_k$ is clearly $1/k$. This shows that $Pr(A_k) = 1/k$ conditioning on $A_{k+1},\dots, A_n$, establishing our claim.
\end{proof}

\begin{Lemma} \label{L:uniform}
Let $0\leq t\leq 1$. Conditioning on the set of elements being exposed before time $t$, and assuming that this set is nonempty, the arrival time of the last element to be tagged before time $t$ is uniform in the interval $[0,t]$.
\end{Lemma}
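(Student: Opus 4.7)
I will fix $t$ and condition throughout on the set $S$ of elements exposed before time $t$, writing $|S|=m\geq 1$. Under this conditioning the arrival times within $S$ are i.i.d.\ uniform on $[0,t]$, and the tagging status of any $a\in S$ is determined by the induced partial order on the elements of $S$ arriving before $a$ together with their weights---no time after $t$ enters. So the question reduces to a statement about the random permutation of $S$ in arrival order combined with the order statistics of $m$ uniform samples on $[0,t]$.

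\textbf{Step 1: the rank of the last tagged element is uniform.} Let $a_1,\dots,a_m$ enumerate $S$ in arrival order and let $A_k$ be the event that $a_k$ is tagged. The argument of Lemma~\ref{L:independence}, applied to the poset induced on $S$, carries over verbatim and gives $\Pr(A_k)=1/k$ with $A_1,\dots,A_m$ independent. The probability that $a_k$ is the last tagged element, i.e.\ that $A_k$ occurs while $A_{k+1},\dots,A_m$ fail, therefore telescopes:
\[
\frac{1}{k}\cdot\frac{k}{k+1}\cdot\frac{k+1}{k+2}\cdots\frac{m-1}{m}=\frac{1}{m}.
\]
Since $a_1$ is always tagged, a last tagged element exists, and its arrival rank $K$ is uniform on $\{1,\dots,m\}$.

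\textbf{Step 2: from uniform rank to uniform time.} For $m$ i.i.d.\ uniform samples on $[0,t]$, the vector of order statistics $T_{(1)}<\dots<T_{(m)}$ is independent of the random permutation recording which sample attains which rank. Because $K$ is determined by that permutation together with the (independent) weights, $K$ is independent of $(T_{(1)},\dots,T_{(m)})$. Hence $T_{(K)}$ equals a uniformly chosen one of $m$ i.i.d.\ uniform samples, so $T_{(K)}$ is itself uniform on $[0,t]$, which is the arrival time we wanted.

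\textbf{Main obstacle.} The one place deserving care is the independence assertion used in Step~2: one must be sure that the identity of the last tagged element is a function of the combinatorial arrival order and weights only, and not of the actual arrival times. Once this is pinned down---essentially by observing that "greedy maximum of $P_x$" depends only on which elements precede $x$ and how they compare---Step~1 is a one-line telescoping computation and Step~2 is the standard observation that a uniformly chosen order statistic of uniform samples is uniform.
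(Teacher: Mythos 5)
Your proof is correct and takes essentially the same route as the paper: both rest entirely on the independence and the $1/k$ tagging probabilities from Lemma~\ref{L:independence}, applied to the elements arriving before time $t$. Where the paper uses that independence to reduce to the linear order (for which the claim is immediate), you telescope to show the arrival rank of the last tagged element is uniform and then convert rank to time via the order statistics; this is just a more explicit rendering of the same argument.
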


\begin{proof}
Suppose $k$ elements arrive before time $t$. Lemma~\ref{L:independence} shows that the joint distribution of $A_1,\dots, A_k$ is the same regardless of the structure of the induced partial order on $\{a_1,\dots,a_k\}$. Therefore it suffices to establish the claim for the linear order on $k$ elements. For this particular order, it is obvious that the last element to be tagged is the unique maximal element, and its arrival time is uniform in $[0,t]$.
\end{proof}

Supposing $P$ is fixed, but the weights of its elements independent and uniform in $[0,1]$, let $\mu(x)$ be the probability that $x$ is the greedy maximum of $P$. Moreover, for $0\leq t \leq 1$, let $\mu_t(x)$ be the probability that $x$ is the greedy maximum of $P$, conditioning on the weight of $x$ being at most $t$.

\begin{Lemma} \label{L:mu_t}
Suppose that $x$ is maximal in $P$. Conditioning on $x$ arriving at time $t$, $$Pr(\text{$x$ is tagged})  = \mu_t(x).$$
\end{Lemma}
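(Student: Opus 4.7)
The plan is to condition on $W_x$ in both probabilities and reduce each to an integral of the same function. The crux is a recursive characterization of ``being the greedy maximum'': for any finite set $S$ containing $x$, with $x$ maximal in $S$, running the greedy procedure on $S$ is equivalent to scanning its elements in order of increasing weight while maintaining a running maximum in the poset. The running maximum only moves up along a chain of $P$, so it becomes $x$ at the moment $x$ is processed if and only if the previous running maximum is strictly below $x$; and since $x$ is maximal in $P$, once this happens the running maximum stays at $x$ forever. Hence $x$ is the greedy maximum of $S$ if and only if either $W_x$ is the minimum weight in $S$ or the greedy maximum of $\{y \in S : W_y < W_x\}$ is strictly below $x$ in $P$.

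Using this, I would define
\[
f(p) = \Pr\bigl(\text{greedy max of } R \text{ is strictly below } x \text{ in } P,\ \text{or } R = \emptyset\bigr),
\]
where $R$ is obtained by including each element of $P\setminus\{x\}$ independently with probability $p$ and assigning i.i.d.\ uniform weights. The quantity $f(p)$ is well-defined because the greedy maximum depends only on the relative order of the weights, so the conditional distribution of the weights given the inclusion pattern does not affect it.

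Now I would evaluate both sides. In the tagging scenario, conditional on $T_x = t$ and $W_x = w$, an element $y \ne x$ satisfies both $T_y \le t$ and $W_y < w$ independently with probability $tw$, using the independence of arrival times and weights. Hence $\Pr(x \text{ is tagged} \mid T_x=t,\, W_x=w) = f(tw)$, and integrating $w$ uniformly over $[0,1]$ and substituting $u = tw$ yields
\[
\Pr(x \text{ is tagged} \mid T_x = t) = \int_0^1 f(tw)\,dw = \frac{1}{t}\int_0^t f(u)\,du.
\]
In parallel, conditional on $W_x = w$ we have $\Pr(x \text{ is greedy max of } P \mid W_x = w) = f(w)$, and integrating $w$ uniformly over $[0,t]$ gives exactly the same value for $\mu_t(x)$.

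The main obstacle is formulating the greedy-maximum characterization cleanly, particularly the corner case where $\{y \in S : W_y < W_x\}$ is empty---in that case $x$ is itself the minimum-weight element, and being maximal in $P$, $x$ is trivially the greedy maximum. Once the characterization is in place, the remaining computation is a short change of variables in a one-dimensional integral.
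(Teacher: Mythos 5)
Your proof is correct, and it rests on the same key fact as the paper's --- that whether the greedy chain terminates at $x$ is determined entirely by the elements of weight below $W_x$, so that both probabilities reduce to a greedy-maximum probability for a Bernoulli-thinned copy of $P\setminus\{x\}$ together with $x$ --- but you package that fact differently. The paper conditions only on $W_x\le t$ and observes that discarding the elements of weight exceeding $t$ yields exactly the thinned poset with retention probability $t$ and (after rescaling) i.i.d.\ uniform weights, which is precisely the distribution of $P_x$ given that $x$ arrives at time $t$; the two probabilities are then equal by a direct coupling, with no integration. You condition on the exact value $W_x=w$, which forces you to introduce the one-parameter function $f(p)$ and to recover the equality via the change of variables $u=tw$ in $\int_0^1 f(tw)\,dw=\frac1t\int_0^t f(u)\,du$. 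What your route buys is a more careful justification of the ``heavy elements are irrelevant'' step: the running-maximum characterization ($x$ is the greedy maximum of $S$ iff $\{y\in S: W_y<W_x\}$ is empty or its greedy maximum lies strictly below $x$, using the maximality of $x$) is spelled out, handles the empty corner case explicitly, and correctly notes that only the relative order of the weights matters, so the conditional weight distributions on $[0,w]$ cause no trouble. The cost is the extra layer of conditioning and the integral computation, which the paper's coarser coupling at thinning probability $t$ avoids entirely.
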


\begin{proof} 
Assign weights to the elements of $P$ and condition on $x$ having weight at most $t$. Since elements of weight greater than $t$ (and therefore greater than the weight of $x$) will have no influence on whether or not the greedy chain terminates at $x$, $\mu_t(x)$ is the probability that $x$ is the greedy maximum in a partial order obtained by first discarding all elements except $x$ independently with probability $1-t$, and then choosing all weights uniformly in $[0,t]$. 

Two observations conclude the proof: First, if all weights are chosen from $[0,t]$, we might as well choose them from $[0,1]$. Second, independently discarding all elements except $x$ with probability $1-t$ is precisely how we get $P_x$ conditioning on $x$ arriving at time $t$.
\end{proof}

\begin{Lemma} \label{L:tag}
Conditioning on $x$ arriving at time $t$, $$Pr(\text{$x$ is tagged}) \geq \mu(x).$$
\end{Lemma}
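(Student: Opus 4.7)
By Lemma~\ref{L:mu_t}, the inequality is equivalent to showing $\mu_t(x) \geq \mu(x)$. My plan is to write both quantities as integrals of a single function of $w(x)$ and then exploit a monotonicity property. Define
\[
g(s) = Pr(\text{$x$ is the greedy maximum of $P$} \mid w(x) = s),
\]
where the other weights are independent uniform in $[0,1]$. Then $\mu(x) = \int_0^1 g(s)\,ds$ and $\mu_t(x) = \frac{1}{t}\int_0^t g(s)\,ds$, so the desired inequality reduces to showing that $g$ is non-increasing on $[0,1]$: the average of a non-increasing function over $[0,t]$ always dominates its average over $[0,1]$.

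To prove monotonicity I would couple by fixing the weights of all elements of $P \setminus \{x\}$ and showing that the set of $s$ for which $x$ is the greedy maximum is a downward-closed interval. The key structural observation, immediate from the definition, is that weights along a greedy chain are non-decreasing: once $z_i$ is chosen, every subsequent $z_j$ lies above $z_{i-1}$ as well, so $w(z_j) \geq w(z_i)$. Suppose now that $x$ is the greedy maximum when $w(x) = s$, with chain $z_0, z_1, \ldots, z_k = x$, and let $s' < s$. If $s' < w(z_0)$, then $x$ becomes the global minimum-weight element, and since $x$ is maximal the chain at $w(x) = s'$ is just $(x)$. Otherwise $z_0$ is unchanged, and I would trace the chain step by step: at indices $i$ where $x \not> z_i$, the element $x$ is not a candidate and $z_{i+1}$ is unaffected by the value of $w(x)$; at the first index $i$ where $x > z_i$, either $s'$ still exceeds $w(z_{i+1})$ and the chain continues unchanged toward the original step at which it absorbs $x$, or $s'$ has dropped below $w(z_{i+1})$, making $x$ itself the minimum-weight element above $z_i$ and rerouting the chain directly to $x$. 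In every branch, the new chain terminates at $x$.

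The main obstacle is keeping this step-by-step case analysis honest: one must verify that once the chain reroutes to $x$ it immediately stops (which uses maximality of $x$), and that in the ``unchanged'' branch the subsequent steps of the original chain remain valid, i.e.\ none of them can be disturbed by the only weight that has been modified, namely $w(x)$. Both points follow from the non-decreasing weight property and the fact that $x$ only becomes a candidate from index $i$ onwards once $x > z_i$. Once the monotonicity of $g$ is in hand, the conclusion $\mu_t(x) \geq \mu(x)$ follows immediately from the integral comparison noted at the start.
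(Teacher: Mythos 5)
Your proposal is correct and follows essentially the same route as the paper: reduce via Lemma~\ref{L:mu_t} to showing $\mu_t(x)\geq\mu(x)$, which holds because the probability that $x$ is the greedy maximum is non-increasing in the weight of $x$. The paper simply asserts this monotonicity in one line, whereas you supply the pathwise coupling (fixing the other weights and checking that lowering $w(x)$ either leaves the greedy chain's steps intact or reroutes it directly to $x$, where it stops by maximality) that justifies it.
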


\begin{proof}
In view of Lemma~\ref{L:mu_t}, it suffices to notice that since decreasing the weight of an element can only increase its probability of being the terminal point of the greedy chain, $$\mu_t(x) \geq \mu(x).$$
\end{proof}

\begin{proof}[Proof of Theorem~\ref{T:main}]
To prove Theorem~\ref{T:main}, only a simple calculation remains. Let $x$ be maximal in $P$, and suppose that $x$ arrives at time $t>1/e$. We want to estimate the probability that $x$ is accepted as a function of $t$ and $\mu(x)$. 

By Lemma~\ref{L:tag}, the probability that $x$ is tagged is at least $\mu(x)$. If we condition on $P_x$, then either $P_x - \{x\}$ is empty, which means $x$ was the first tagged element, or it is nonempty, and by Lemma~\ref{L:uniform} the arrival time of the last element that was tagged before time $t$ is uniform in $[0,t]$. In any case, the probability that no element was tagged between time $1/e$ and time $t$ is at least $1/(et)$.
Therefore the total probability that $x$ is accepted is at least $$\mu(x) \cdot \int_{1/e}^1 \frac1{et}\, dt = \frac1e  \cdot  \mu(x).$$
Summing over all maximal elements of $P$ we find that the probability that one of them is accepted is at least $1/e$.
\end{proof}

\end{document}